\title{Energy Games over Totally Ordered Groups} 
\author{Alexander Kozachinskiy\thanks{kozlach@mail.ru. Steklov Mathematical Institute of Russian Academy of Sciences, 8 Gubkina St., Moscow 119991, Russia. This work was performed at the Steklov International Mathematical Center and supported by the Ministry of Science and Higher Education of the Russian Federation (agreement no. 075-15-2022-265).}
}
\newtheorem{theorem}{Theorem}
\newtheorem{proposition}[theorem]{Proposition}
\newtheorem{definition}{Definition}
\newtheorem{conjecture}{Conjecture}
\newcommand{\source}{\mathsf{source}}
\newcommand{\target}{\mathsf{target}}
\newcommand{\colo}{\mathsf{col}}
\newcommand{\val}{\mathbf{val}}
\newcommand{\sval}{\mathbf{\overline{val}}}
\newcommand{\per}{\mathsf{per}}
\theoremstyle{definition}
\begin{document}
\maketitle

\begin{abstract}
Kopczy\'{n}ski (ICALP 2006)
conjectured that prefix-independent half-positional winning conditions are closed under finite unions. We refute this conjecture over finite arenas. For that, we introduce a new class of prefix-independent bi-positional winning conditions called energy conditions over totally ordered groups. We give an example of two such conditions whose union is not half-positional. We also conjecture that every prefix-independent bi-positional winning condition coincides with some energy condition over a totally ordered group on periodic sequences.
\end{abstract}

\section{Introduction}

This paper is devoted to positional determinacy in turn-based infinite-duration games over \emph{finite} arenas. An arena is a finite directed graph whose edges are colored into elements of some finite set of colors $C$ and whose nodes are partitioned between two players called Alice and Bob. They play by traveling over the nodes of the arena. In each turn, one of the players chooses an edge from the current node, and the players move towards the endpoint of this edge. Whether it is an Alice's or a Bob's turn to choose depends on whether the current node is an Alice's node or a Bob's node. This continues for infinitely many turns. As a result, the players obtain an infinite word over $C$ (by concatenating colors of edges that appear in the play). A \emph{winning condition} $W$, which is a set of infinite words over $C$, defines the aims of the players. Alice wants to obtain an infinite word which belongs to $W$, while Bob wants it to be outside $W$.

A vast amount of literature in this area is devoted to \emph{positional strategies}. A strategy of Alice or Bob is positional if it never makes two different moves from the same node. Implementing such strategies is easy because we only have to specify one edge for each node of the corresponding player. This makes these strategies relevant for such areas as controller synthesis~\cite{bloem2018graph}, where an implementation of a controller can be seen as its strategy against an environment.

Correspondingly, of great interest are winning conditions for which positional strategies are always sufficient to play optimally (for one of the players or even for both of them). This area has the following terminology.
A winning condition $W$ is \emph{half-positional} if for every arena the following holds: either Alice has a positional winning strategy w.r.t.~$W$ or Bob has a winning strategy w.r.t.~$W$ (not necessarily positional). A winning condition $W$ is \emph{bi-positional} if additionally the same requirement as for Alice holds for Bob (or, in other words, if both $W$ and its complement are half-positional).

The most famous example of a bi-positional winning condition is the parity condition. It has an abundance of applications in logic, including decidability of logical theories~\cite{zielonka1998infinite} and modal $\mu$-calculus~\cite{gradel2002automata}.

Gimbert and Zielonka performed a general study of bi-positional winning conditions in a series of two papers~\cite{gimbert2004can,gimbert2005games}. In~\cite{gimbert2004can}, they gave a powerful sufficient condition for bi-positionality. It is suitable for almost all known bi-positional winning conditions. Moreover, in~\cite{gimbert2005games} they gave a sufficient and necessary condition for bi-positionality. Unfortunately, it is far more complex, and hence is less convenient for applications. Nevertheless, it has a corollary called \emph{1-to-2-player lifting}, which is of great interest in practice. It states that as long as a winning condition  is half-positional for Alice in arenas without Bob and half-positional for Bob in arenas without Alice, it is bi-positional in all arenas.

 At the same time, several strong sufficient conditions for half-positionality were obtained in the literature~\cite{kopczynski2006half,bianco2011exploring}, but none of them is necessary. In general, half-positionality is far less understood than bi-positionality. For instance, Kopczy\'{n}ski~\cite{kopczynski2006half} made the following simple-looking conjecture, which was open for more than 15 years.

 \begin{conjecture}[Kopczy\'{n}ski, \cite{kopczynski2006half}]
\label{conj}
Prefix-independent half-positional winning conditions are closed under finite unions. (``Prefix-independent'' means  closed under removing and adding finite prefixes.)
\end{conjecture}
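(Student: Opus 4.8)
My plan is to \emph{refute} Conjecture~\ref{conj}, by exhibiting two prefix-independent conditions that are in fact bi-positional---hence in particular half-positional, so legitimate instances for the conjecture---whose union is \emph{not} half-positional over a finite arena. The vehicle is the announced class of energy conditions. Fix a totally ordered group $(G,+,\le)$ (I take $+$ abelian for concreteness) and let the colors be a finite subset of $G$. Given a play producing $g_1 g_2 g_3 \cdots$, set $s_n = g_1 + \cdots + g_n$, and the natural choice, which I adopt, is to declare the play winning for Alice iff $(s_n)_{n\ge 1}$ is bounded below in $G$; call this condition $E_G$. The first thing to record is prefix-independence: prepending a finite word $u$ changes every $s_n$ with $n \ge |u|$ by the fixed element $\operatorname{sum}(u)$, and since in a totally ordered group translation preserves the order, boundedness below is unaffected; the finitely many initial partial sums are irrelevant as well. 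Hence $E_G$ is prefix-independent, so a union of two such conditions failing half-positionality genuinely refutes Conjecture~\ref{conj}.

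Next I would show that each $E_G$ is bi-positional, leaning on the 1-to-2-player lifting recalled above: it suffices to verify half-positionality for Alice in arenas consisting only of Alice's nodes and for Bob in arenas consisting only of Bob's nodes. In a one-player arena the question ``can the controller keep $(s_n)$ bounded below?'' reduces to a combinatorial statement about the signs, in the order of $G$, of simple-cycle sums reachable in the finite graph, and the optimal behaviour can be realized by committing to a single outgoing edge per node. Formalizing this for an arbitrary totally ordered group is routine but needs a little care, since cycle sums now live in $G$ rather than in $\mathbb{Z}$; the total order is exactly what makes the greedy/positional argument go through.

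The heart of the refutation is the counterexample. I would build a finite arena with a Bob-controlled ``scheduler'' that can, repeatedly and at will, direct play into one of two regions, together with two energy conditions $E_{G_1}$ and $E_{G_2}$ whose colorings are in tension, so that any move protecting the $G_1$-budget erodes the $G_2$-budget and vice versa. The design goal is twofold: (i) Alice can win $E_{G_1}\cup E_{G_2}$ with memory, by watching Bob's scheduling and, once it stabilizes, devoting herself to whichever budget Bob has stopped attacking; but (ii) a positional strategy fixes once and for all how Alice reacts in each region, and Bob can then interleave his two modes of attack so as to drive \emph{both} partial-sum sequences to their infimum and below every bound. To make (i) and (ii) genuinely incompatible I expect to need the expressive power of non-archimedean groups such as $\mathbb{Z}^2$ under the lexicographic order, so that each condition encodes a \emph{conditional} energy demand---keep the primary coordinate bounded below and, at its critical level, the secondary one too---a conditioning that a fixed positional choice cannot track.

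The step I expect to be the main obstacle is part (ii): proving that \emph{no} positional strategy of Alice wins the union. Establishing the memoryful win in (i) is an invariant/potential argument of the usual energy-game flavor, and the bi-positionality of the two ingredients is largely bookkeeping. But ruling out all positional strategies requires, for each of Alice's finitely many positional choices, an explicit Bob response that defeats it, and each response must be shown to send both the $G_1$- and the $G_2$-partial sums below every bound. Organizing this case analysis cleanly---ideally by isolating a single decision gadget so that only a constant number of positional strategies need be examined---will be the crux, and tuning the two colorings and the two lexicographic orders so that Bob always has such a response is where the real work lies.
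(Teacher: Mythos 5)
Your high-level plan matches the paper's: refute the conjecture by exhibiting two prefix-independent bi-positional energy-type conditions whose union is not half-positional. But the proposal stops exactly where the proof has to happen, and the route you sketch for the missing step is likely a dead end. You explicitly defer part (ii) --- constructing the arena and the two conditions and showing that \emph{no} positional strategy of Alice wins the union --- calling it ``where the real work lies.'' That is the entire content of the refutation; without it there is no counterexample. Moreover, you propose to find it among energy conditions over \emph{abelian} groups such as $\mathbb{Z}^2$ with lexicographic order. The paper's construction hinges on the opposite choice: it orders the free group $F_{\{a,b\}}$ (which is totally orderable but non-abelian) and takes $W_1\cup W_2$ with respect to an order and its reverse, so that the union becomes ``the partial sums take infinitely many distinct values'' (via Ramsey). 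Alice's winning non-positional strategy alternates between an $a^{\pm1}$-gadget and a $b^{\pm1}$-gadget, and the partial sums $a^{\pm1}, a^{\pm1}b^{\pm1}, a^{\pm1}b^{\pm1}a^{\pm1},\dots$ are pairwise distinct precisely because the group is free. In an abelian group Bob can cancel across the two gadgets (e.g.\ answer $a, b, a^{-1}, b^{-1},\dots$), confining the partial sums to a finite set and defeating the alternating strategy as well; the commutator $aba^{-1}b^{-1}\ne\varepsilon$ is exactly what blocks this. So the ``tuning'' you hope to do over lexicographic $\mathbb{Z}^2$ is not bookkeeping --- it is obstructed, and the paper's remark that all abelian-energy conditions are ``permuting'' signals that non-commutativity is the essential new ingredient.

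Two smaller issues. First, your winning condition (``partial sums bounded below'') is not the paper's (``partial sums admit an infinite decreasing subsequence''); these coincide in spirit over $\mathbb{Z}$ but diverge over general totally ordered groups, and the paper's choice is the one for which bi-positionality is actually proved (via Gimbert--Zielonka's fairly-mixing criterion, not via 1-to-2-player lifting). Second, relying on 1-to-2-player lifting is delicate here: the paper shows that the union $W_1\cup W_2$ itself \emph{refutes} 1-to-2-player lifting for half-positionality, so while the lifting theorem for bi-positionality is legitimate for each individual condition, you would still owe a complete one-player argument over an arbitrary totally ordered group, which you have not supplied.
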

He refuted this conjecture for uncountable unions. However, the  assortment of half-positional winning conditions at the time was not wide enough to refute it even for countable unions.

 In this paper, we refute  Conjecture \ref{conj}. Moreover,  we show that there are two \emph{bi-positional} prefix-independent winning conditions whose union is not \emph{half-positional}.
To this end, we introduce a new class of bi-positional winning conditions called \emph{energy conditions over totally ordered groups}, or ETOG conditions for short. They are defined as follows (see more details in Section \ref{sec:def}). We consider elements of some totally ordered group (we stress that it should be bi-ordered) as colors of edges. Given an infinite sequence of these elements, we arrange them into a formal series. Alice wants the sequence of its partial sums to have an infinite decreasing subsequence. This generalizes canonical energy\footnote{In case of $\mathbb{Z}$, a color of an edge is interpreted as an amount of energy needed to pass through this edge (and negative edges are edges where one can recharge). In this sense, Bob wins if there is a finite amount of initial energy allowing him to never run out of it.} conditions~\cite{chakrabarti2003resource} that are defined over $\mathbb{Z}$ with the standard ordering.

In Section \ref{sec:pos}, we establish bi-positionality of energy conditions over totally ordered group using a sufficient condition of Gimbert and Zielonka. Next, we refute Conjecture \ref{conj} in Section \ref{sec:refute}. A key factor allowing us to do this is that free groups can be totally ordered. We construct two energy conditions over a free group with 2 generators whose union is not half-positional. We also observe in Section \ref{sec:refute} that energy conditions over free groups are non-permuting, and that they can be used to refute 1-to-2-player lifting for half-positionality.

We believe that the class of energy conditions over totally ordered groups is interesting on its own. Namely, we find this class suitable for the following conjecture.
\begin{conjecture}
\label{our_conj}
Every bi-positional prefix-independent winning condition coincides on periodic sequences with some energy condition over a totally ordered group.
\end{conjecture}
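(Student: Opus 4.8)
The plan is to first reduce the conjecture to a purely order-theoretic statement about cycles. Fix a bi-ordered group $G$ and regard a periodic play $u^\omega$, with $u=c_1\cdots c_k$, through a monoid homomorphism $\phi\colon C^*\to G$ sending each colour to a group element. Writing $g=\phi(u)$ for the product over one period, the partial sums of $u^\omega$ are exactly the elements $g^m\cdot\phi(c_1\cdots c_j)$ for $m\ge 0$ and $0\le j<k$. A short argument (using left-invariance of the order together with the pigeonhole principle over the $k$ residues $j$) shows that this sequence has an infinite decreasing subsequence if and only if $g<e$: when $g<e$ the subsequence $e>g>g^2>\cdots$ works; when $g>e$ each residue class is increasing, so no infinite decreasing subsequence exists; and when $g=e$ the partial sums are periodic. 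Hence on periodic sequences an ETOG condition is completely described by the sign of one period, namely Alice wins $u^\omega$ iff $\phi(u)<e$. The conjecture is therefore equivalent to: for every bi-positional prefix-independent $W$ there exist a bi-ordered group $G$ and an injective $\phi\colon C\to G$ such that $u^\omega\in W\iff\phi(u)<e$ for all nonempty $u$.

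Second, I would extract the combinatorial constraints that bi-positionality imposes on the partition of cyclic words into ``Alice wins'' and ``Bob wins''. Consider a one-player ``flower'' arena in which a single node of Bob carries two simple loops reading words $u$ and $v$. The positional plays are exactly $u^\omega$ and $v^\omega$, so the positional half of bi-positionality for Bob forces the following: if $u^\omega\in W$ and $v^\omega\in W$, then Bob cannot avoid $W$ at all, hence every shuffle of $u$ and $v$---in particular $(uv)^\omega$---lies in $W$. Dually, the positional half for Alice gives that the complement is closed under shuffles. Together with prefix-independence (which makes membership of $u^\omega$ depend only on the cyclic class of $u$, i.e.\ on the conjugacy class of $\phi(u)$) and the identity $(u^m)^\omega=u^\omega$, this shows that, inside the image of the nonempty positive words $\phi(C^+)$, the set $N=\{\phi(u):u^\omega\in W\}$ and its complement are both sub-semigroups, each rotation- and power-invariant. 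These are precisely the restrictions to positive words of the negative and non-negative cones of a bi-invariant order.

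Third, I would take $G$ to be the free group on $C$ (so that $\phi$ is injective and no nonempty positive word is trivial, ruling out the degenerate $g=e$ case), and define the candidate positive cone $P$ to be the normal sub-semigroup of $G$ generated by $\{\phi(u):u^\omega\notin W\}$ together with $\{\phi(u)^{-1}:u^\omega\in W\}$. If one can show that $P\cap P^{-1}=\emptyset$, then Zorn's lemma extends $P$ to a maximal normal sub-semigroup with trivial intersection with its inverse, and---using that free groups are bi-orderable---one would argue that this maximal cone is total and induces a bi-invariant order realizing $W$ on periodic sequences.

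The hard part is establishing the consistency $P\cap P^{-1}=\emptyset$ (and, relatedly, securing totality of the maximal extension). The flower-arena constraints only directly control products of elements that are honest concatenations of positive words sharing a node, whereas a hypothetical violation $e=w_1\cdots w_n$, with each $w_i$ a conjugate of a prescribed-positive element, may freely mix positive letters with inverse letters and arbitrary conjugators---a configuration that does not obviously correspond to any single arena. Ruling such relations out seems to require more than flower arenas: either a stronger structural consequence of Gimbert and Zielonka's full characterization of bi-positionality, or replacing the free group by a group engineered so that the prescribed sign pattern provably extends. I expect this extension/consistency step to be the crux, and it is presumably the reason the statement is posed as a conjecture rather than a theorem.
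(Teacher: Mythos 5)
The statement you are addressing is Conjecture~\ref{our_conj}, which the paper does not prove: it is left open, and Section~\ref{sec:conj} only reduces it to equivalent open statements. Your proposal is likewise not a proof, and you acknowledge as much; what you have written is essentially the same chain of reductions the paper carries out. Your first step (on a periodic word $u^\omega$, membership in an ETOG condition depends only on whether $\val(u)<0$) is the paper's unlabeled proposition computing $\per(W)=\{x\in C^+\mid \val(x)<0\}$ for ETOG conditions. Your second step (flower arenas plus prefix-independence force $\{u\in C^+ : u^\omega\in W\}$ and its complement to be closed under concatenation and cyclic shifts) is Proposition~\ref{prop:periods}. Your third step (pass to the free group $F_C$ and seek an invariant sub-semigroup, equivalently a bi-invariant cone, separating the two classes) is exactly the content of Conjectures~\ref{our_conj2} and~\ref{our_conj3}, which the paper proves equivalent to Conjecture~\ref{our_conj} but does not resolve.

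The genuine gap is the one you yourself name: showing that the invariant sub-semigroup of $F_C$ generated by the prescribed signs is disjoint from its set of inverses, and that a maximal such cone can be made total. That is precisely Conjecture~\ref{our_conj3}, i.e., the entire open content of the statement, and nothing in your proposal (or in the paper) closes it. Two smaller cautions. First, even granting $P\cap P^{-1}=\emptyset$, a Zorn-maximal invariant cone in a group need not be total, so totality would require a separate argument; the paper handles this by passing to a quotient group and building totality into the statement of Conjecture~\ref{our_conj3} rather than deriving it. Second, in your flower-arena step, bi-positionality only constrains plays realizable in that arena, namely products of whole traversals of the $u$-loop and the $v$-loop; the phrase ``every shuffle of $u$ and $v$'' overclaims if read at the level of letters, though this does not affect your use of $(uv)^\omega$. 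In short: correct reductions matching the paper's Section~\ref{sec:conj}, but the statement remains a conjecture and your attempt does not prove it.
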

We cannot expect it to hold for all sequences, but periodic once are sufficient, say, for algorithmic applications. 
If our conjecture is true, it gives an explicit description of the class of bi-positional prefix-independent winning condition. This would in line with an explicit description of the class of \emph{continuous} bi-positional payoffs from~\cite{kozachinskiy:LIPIcs.CONCUR.2021.10}.
We discuss our conjecture in more detail in Section \ref{sec:conj}, where we reduce it to a problem about free groups.

\medskip

\textbf{Open problems and related works.} Conjecture \ref{conj} is still open for \emph{infinite arenas}, despite a recent breakthrough by  Ohlmann~\cite{pierrephd}, who obtained a characterization of the infinite-arena  half-positionality in terms of well-founded universal graphs. Unfortunately, ETOG conditions are generally not half-positional over infinite arenas.

Another interesting open question is whether
Conjecture \ref{conj} holds in restriction to $\omega$-regular conditions.  In a recent preprint~\cite{https://doi.org/10.48550/arxiv.2205.01365}, Bouyer et al.~(among other results) give a positive answer to this question for  $\omega$-regular conditions, recognizable by deterministic B\"{u}chi automata (DBA). In fact, they simply show that every prefix-independent DBA-recognizable  $\omega$-regular condition can be given as a set of sequences, having infinitely many occurrences of some fixed subset of colors. Such conditions are trivially closed under finite unions. General prefix-independent $\omega$-regular conditions might be arranged in a far more complex way (even though there is an algorithm due to Kopczy\'{n}ski~\cite{kopczynski2007omega}, deciding half-positionality for them).

\section{Preliminaries}

If $C$ is a set, we denote by $C^*$ (resp., by $C^\omega$) the set of all finite (resp., infinite) words over $C$. For $x\in C^*$, by $|x|$ we denote the length of $x$. Additionally, by $C^+$ we denote the set of all finite non-empty words over $C$. If $x\in C^+$, then by $x^\omega$ we denote an infinite word obtained by repeating $x$ infinitely many times. The free group over $C$ is denoted by $F_C$.

\medskip

An arena $\mathcal{A}$ over a non-empty finite set (of colors) $C$ is a tuple $\langle V_A, V_B, E\rangle$, where $V_A$ and $V_B$ are disjoint finite sets and $E\subseteq (V_A\cup V_E) \times C \times (V_A\cup V_B)$ is such that for every $s\in V_A\cup V_B$ there exist $c\in C$ and $t\in V_A\cup V_B$ for which $(s, c, t)\in E$. Elements of $V_A$ are called Alice's nodes, and elements of $V_B$ are called Bob's nodes. Elements of $E$ are called edges of $\mathcal{A}$. An edge $e = (s, c, t)\in E$ is represented as a $c$-colored arrow from $s$ to $t$. We use the notation $\source((s, c, t)) = s, \colo((s, c, t)) = c$ and $\target((s, c, t)) = t$. Our definition guaranties that every node $v\in V_A\cup V_B$ has an out-going edge, that is, an edge $e$ such that $\source(e) = v$.

An infinite-duration game over $\mathcal{A}$ from a node $s\in V_A\cup V_B$ is played as follows. At the beginning, one of the players chooses an edge $e_1\in E$ with $\source(e_1) = s$. Namely, if $s\in V_A$, then Alice chooses $e_1$, and if $s\in V_B$, then Bob chooses $e_1$. More generally, in the first $n$ turns players choose $n$ edges $e_1, e_2, \ldots, e_n\in E$, one edge per turn. These edges always form a \emph{path} in $\mathcal{A}$, that is, we have $\target(e_1) = \source(e_2), \ldots, \target(e_{n-1}) = \source(e_n)$. Then the $(n + 1)$st turn is played as follows. Players consider the endpoint node of the current path, which is $\target(e_n)$. One of the players chooses an edge $e_{n + 1}$ with $\source(e_{n + 1}) = \target(e_n)$. Namely, if $\target(e_n) \in V_A$, then Alice chooses $e_{n + 1}$, and if $\target(e_n)\in V_B$, then Bob chooses $e_{n + 1}$. After infinitely many turns, players get an infinite sequence of edges $p = (e_1, e_2, e_3, \ldots)$ called a play (it forms an infinite path in $\mathcal{A}$).

A winning condition over a set of colors $C$ is a subset $W\subseteq C^\omega$. A strategy of Alice is winning from $s\in V_A\cup V_B$ w.r.t.~$W$ if any play $p = (e_1, e_2, e_3, \ldots)$ with this strategy in the infinite-duration game over $\mathcal{A}$ from $s$ is such that its sequence of colors $\colo(e_1)\colo(e_2)\colo(e_3)\ldots$ belongs to $W$. Similarly, a strategy of Bob is winning from $s\in V_A\cup V_B$ w.r.t.~$W$ if any play $p = (e_1, e_2, e_3, \ldots)$ with this strategy in the infinite-duration game over $\mathcal{A}$ from $s$ is such that $\colo(e_1)\colo(e_2)\colo(e_3)\ldots\notin W$.

A positional strategy of Alice is a function $\sigma\colon V_A\to E$ such that $\source(\sigma(u)) = u$ for any $u\in V_A$. It is interpreted as follows: for any $u\in V_A$, whenever Alice has to choose an edge from $u$, she chooses $\sigma(u)$. Similarly, a positional strategy of Bob is a function  $\tau\colon V_B\to E$ such that $\source(\tau(u)) = u$ for any $u\in V_B$. It is interpreted analogously.

 A winning condition $W\subseteq C^\omega$ is \emph{half-positional} if for every finite arena $\mathcal{A}$ over $C$ there exists a positional strategy $\sigma$ of Alice such that for every node $s$ of $\mathcal{A}$ the following holds: if $\sigma$ is not winning w.r.t.~$W$ from $s$, then Bob has a winning strategy w.r.t.~$W$ from $s$. A winning condition $W$ is \emph{bi-positional} if both $W$ and its complement $C^\omega\setminus W$ are half-positional.

A winning condition $W\subseteq C^\omega$ is \emph{prefix-independent} if for all $x\in C^*$ and $\alpha\in C^\omega$ we have $\alpha\in W\iff x\alpha \in W$.

We state the following sufficient condition for bi-positionality due to Gimbert and Zielonka.

\begin{definition} 
\label{def:fairly_mixing}
Let $W\subseteq C^\omega$ be a winning condition over a finite set of colors $C$. We call $W$ \textbf{fairly mixing} if the following 3 conditions hold:
\begin{enumerate}[label=\Alph*)]
\item For every $x\in C^*$ and $\alpha, \beta\in C^\omega$ we have that 
\[(x\alpha\notin W \land x\beta\in W) \implies (\alpha\notin W \land \beta\in W).\]
\item For every $S\in\{W, C^\omega\setminus W\}$, for every $x\in C^+$ and for every $\alpha\in C^\omega$ we have that
\[(x^\omega\in S, \alpha\in S)\implies (x\alpha\in S).\]
\item For every $S\in\{W, C^\omega\setminus W\}$ and for every infinite sequence $x_1, x_2, x_3, \ldots\in C^+$ it holds that:
\begin{align*}
\bigg[( x_1 x_3 x_5\ldots\in S\big) \land (x_2 x_4 x_6\ldots\in S) \land(\forall n \ge 1\,\, x_n^\omega\in S) \bigg] \implies x_1 x_2 x_3\ldots\in S.
\end{align*}
\end{enumerate}
\end{definition}

\begin{theorem}[\cite{gimbert2004can}]
\label{thm:gztheorem}
 Any fairly mixing winning condition is bi-positional.
\end{theorem}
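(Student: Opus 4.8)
The plan is to deduce this from the standard Gimbert--Zielonka positional-determinacy machinery, proving the stronger statement that in every finite arena both players have optimal positional strategies, by induction on the combinatorial complexity $d(\mathcal{A}) = \sum_{v}(\mathrm{outdeg}(v) - 1)$, the number of ``superfluous'' outgoing edges. The first observation is that the three conditions are invariant under replacing $W$ by $C^\omega\setminus W$: conditions B) and C) quantify over $S\in\{W, C^\omega\setminus W\}$, and A) becomes itself after swapping $\alpha$ and $\beta$. Hence Alice and Bob are interchangeable, and it suffices to treat a single owner type at the inductive node. When $d(\mathcal{A}) = 0$ every node has a unique outgoing edge, so from each start node there is exactly one play and the forced positional strategies are trivially optimal.

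For the inductive step I would pick a node $v$ with $\mathrm{outdeg}(v)\ge 2$ and, passing to the complement if necessary, assume $v\in V_B$. Partition the edges leaving $v$ into two nonempty sets $E'$ and $E''$, and let $\mathcal{A}'$, $\mathcal{A}''$ be the arenas that keep only $E'$, respectively only $E''$, at $v$. Both have strictly smaller $d$, so by the induction hypothesis each is bi-positional, with positional optimal strategies $\sigma', \sigma''$ for Alice and $\tau', \tau''$ for Bob. Since Bob owns $v$ and has strictly more options in $\mathcal{A}$, his winning region can only grow, which gives the easy inclusion $\mathrm{Win}_A(\mathcal{A})\subseteq \mathrm{Win}_A(\mathcal{A}')\cap \mathrm{Win}_A(\mathcal{A}'')$. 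The real content is the reverse inclusion together with the synthesis of single positional strategies in $\mathcal{A}$.

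To show Alice wins positionally from $\mathrm{Win}_A(\mathcal{A}')\cap \mathrm{Win}_A(\mathcal{A}'')$, I would fix a candidate positional strategy built from $\sigma'$ and $\sigma''$ and analyze an arbitrary play against an arbitrary Bob strategy by decomposing it at the successive visits to $v$. A play visiting $v$ only finitely often eventually stays in $\mathcal{A}'$ or $\mathcal{A}''$ forever; condition A) lets one strip the transient prefix up to the last visit of $v$ without changing membership in $W$, so optimality in the relevant sub-arena applies. A play visiting $v$ infinitely often is a concatenation of first-return segments to $v$, each beginning with an $E'$-edge or an $E''$-edge, and grouping them exhibits the play as a shuffle of two interleaved subsequences. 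Conditions B) and C) are exactly the closure facts needed: condition C) shows that a shuffle of two winning sequences whose constituent cycles are all winning is itself winning, while condition B) reconciles the transient prefix with the recurrent cyclic behaviour at $v$ by prepending a single period. Feeding in sub-arena optimality — which certifies that each first-return cycle and each of the two interleaved subplays lies on the correct side of $W$ — then forces the shuffled play into $W$. The symmetric bookkeeping for Bob, using the same conditions for $S = C^\omega\setminus W$, produces his optimal positional strategy and closes the step.

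The main obstacle is precisely the infinitely-often case: such a play genuinely interleaves $E'$- and $E''$-behaviour and is a play of neither sub-arena, so neither induction hypothesis applies to it directly. The whole argument hinges on repackaging this interleaving as a shuffle of cycle-sequences to which condition C) applies and on checking its hypotheses (that each iterated first-return cycle, and each interleaved subplay, stays on the correct side of $W$), with condition B) handling the prefix. The secondary subtlety is ensuring the combined strategies carry \emph{no} memory of which edge-class was last used at $v$; this is exactly what the mixing closure B)--C) buys, since it makes the outcome depend only on which cycles are traversed and not on their order, so a fixed positional choice is never worse than adaptively switching. This is the crux the three conditions are engineered to supply, and the remainder of the induction is routine bookkeeping.
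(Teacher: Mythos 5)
First, a point of reference: the paper does not prove Theorem~\ref{thm:gztheorem} at all --- it is imported as a black box from Gimbert and Zielonka \cite{gimbert2004can} --- so your attempt has to be judged against their argument rather than anything in this text. Your skeleton does match theirs: induction on $\sum_v(\mathrm{outdeg}(v)-1)$, a trivial base case of forced plays, splitting the outgoing edges of a vertex $v$ into $E'$ and $E''$, using the self-duality of conditions A)--C) to fix the owner of $v$, and analysing a play that visits $v$ infinitely often as a shuffle of first-return cycles to which condition C) is applied (and your observation that a first-return cycle $x$ consistent with an optimal positional strategy satisfies $x^\omega\in S$, because Bob can repeat his moves, is the correct way to discharge the hypothesis $\forall n\; x_n^\omega\in S$).

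The genuine gap is the step you describe as ``fix a candidate positional strategy built from $\sigma'$ and $\sigma''$.'' In your setup $v\in V_B$, so Alice must produce a \emph{single} positional strategy in $\mathcal{A}$ that is good from $\mathrm{Win}_A(\mathcal{A}')\cap\mathrm{Win}_A(\mathcal{A}'')$, while $\sigma'$ and $\sigma''$ may prescribe different edges at her own nodes; there is no canonical merge. Worse, your application of condition C) silently assumes that in the shuffled play the concatenation of the $E'$-blocks is a play consistent with $\sigma'$ and the concatenation of the $E''$-blocks is consistent with $\sigma''$ --- but Alice is playing one positional strategy, which cannot behave like $\sigma'$ inside $E'$-blocks and like $\sigma''$ inside $E''$-blocks, since it has no way of knowing which block it is in. Your closing remark that conditions B)--C) ``buy'' this memorylessness is not right: those conditions constrain the winning condition $W$, not the strategies, and they cannot make an arbitrary merge of $\sigma'$ and $\sigma''$ simulate both. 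The actual Gimbert--Zielonka argument resolves exactly this point by a more careful, asymmetric treatment (comparing the two subgames and showing that one of the inherited strategies, suitably extended, already suffices, together with a separate uniformization step to get one strategy good from all winning vertices simultaneously); as written, your inductive step does not go through at its central claim.
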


\section{Definition of Energy Games over Totally Ordered Groups}
\label{sec:def}

Recall that a totally ordered group is a triple $(G, +, \le)$, where $(G, +)$ is a group and $\le$ is a total order on $G$ such that
\[a\le b \implies x + a + y \le x + b + y \qquad \mbox{for all } a, b, x, y\in G.\]

Consider any finite set $C$ of colors and any totally ordered group $(G, +,\le)$. By a \emph{valuation of colors} over $(G, +, \le)$ we mean any function $\val\colon C\to G$. It can be extended to a homomorphism $\val\colon C^*\to G$ by setting
\[\val(\mbox{empty word}) = 0, \qquad \val(c_1 c_2\ldots c_n) = \val(c_1) + \val(c_2) + \ldots + \val(c_n).\]
Additionally, for every infinite sequence of colors $c_1 c_2 c_3\ldots\in C^\omega$, we denote by $\sval(c_1 c_2 c_3\ldots)$ the sequences of valuations of its finite prefixes: 
\[\sval(c_1 c_2 c_3\ldots) = \{\val(c_1\ldots c_n)\}_{n = 1}^\infty.\]
In other words,  $\sval(c_1 c_2 c_3\ldots)$ is the sequence of  partial sums of the formal series $\sum_{n = 1}^\infty \val(c_n)$.

\emph{An energy condition over  $(G, +, \le)$}, defined by a valuation of colors $\val\colon C\to G$, is the set $W\subseteq C^\omega$ of all $\alpha\in C^\omega$ such that $\sval(\alpha)$
has an infinite decreasing subsequence. It is immediate that any energy condition over a totally ordered group is prefix-independent.

As an illustration, we show that parity conditions fall into this definition. The parity condition over $d$ priorities is a winning condition $W_{par}^d\subseteq\{1,2,\ldots,d\}^\omega$, 
\[W_{par}^d = \{ c_1 c_2 c_3\ldots\in \{1, 2, \ldots, d\}^\omega\mid \limsup_{n\to\infty} c_i \mbox{ is odd}\}.\]
Observe that $W_{par}^d$ is an energy condition over $\mathbb{Z}^d$ with the lexicographic ordering, defined by the following valuation:
\begin{align*}
    \val(d) &= ((-1)^d, 0, \ldots 0) \\
    \val(d - 1) &= (0, (-1)^{d - 1}, \ldots 0)\\
    &\vdots\\
     \val(1) &= (0, 0, \ldots, -1).
\end{align*}

As far as we know, the most general class of bi-positional prefix-independent winning conditions that were previously considered are \emph{priority mean payoff conditions}~\cite{gimbert2006deterministic}. They can also be defined as energy conditions over $\mathbb{Z}^d$. Moreover, to define them, it is sufficient to consider only valuations that map each color to a vector with at most 1 non-zero coordinate, as in the case of parity conditions.

\section{Bi-positionality of Energy Conditions over Totally Ordered Groups}
\label{sec:pos}
In this section, we establish

\begin{theorem}
\label{thm:pos}
Every ETOG condition  is bi-positional.
\end{theorem}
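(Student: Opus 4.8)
The plan is to verify the three conditions of Definition~\ref{def:fairly_mixing} and invoke Theorem~\ref{thm:gztheorem}. Throughout, fix the valuation $\val\colon C\to G$ and recall that membership $\alpha\in W$ means precisely that the sequence of partial sums $\sval(\alpha) = \{\val(\alpha_1\ldots\alpha_n)\}_{n\ge 1}$ admits an infinite strictly decreasing subsequence. The central fact I would lean on is that $\val$ is a homomorphism, so prefixing by $x$ shifts every partial sum on the left by the constant $g = \val(x)$: we have $\val(x\cdot\alpha_1\ldots\alpha_n) = g + \val(\alpha_1\ldots\alpha_n)$. Because the order is compatible with the group operation (in particular left-invariant, $a\le b \implies g+a\le g+b$), adding a constant on the left preserves the relative order of any two partial sums. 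Hence $\{\val(\alpha_1\ldots\alpha_n)\}$ has an infinite decreasing subsequence if and only if $\{g+\val(\alpha_1\ldots\alpha_n)\}$ does.

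For condition~A, note first that $\sval(x\alpha)$ is the finite sequence $\val(x\text{'s prefixes})$ followed by the tail $g+\sval(\alpha)$ where $g=\val(x)$. Whether an infinite decreasing subsequence exists depends only on the tail, since any finite prefix can be discarded. By the left-invariance observation, the tail $g+\sval(\alpha)$ has an infinite decreasing subsequence iff $\sval(\alpha)$ does. Therefore $x\alpha\in W \iff \alpha\in W$; this is strictly stronger than what A demands and yields both conjuncts $\alpha\notin W$ and $\beta\in W$ immediately. In fact this shows directly that $W$ is prefix-independent, which we would state as the first lemma.

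For conditions B and C I would work with $S=W$ and $S=C^\omega\setminus W$ separately, translating ``$\alpha\in S$'' into an order-theoretic statement about partial sums. The key auxiliary quantities are, for each finite block $x_i$, the group element $g_i=\val(x_i)$ and the behavior of partial sums \emph{within} one copy of the block. I expect the main technical work, and the main obstacle, to lie in condition~C: one must show that if the odd-indexed concatenation, the even-indexed concatenation, and each periodic word $x_n^\omega$ all lie in $S$, then the full interleaving $x_1x_2x_3\ldots$ lies in $S$. The idea is to analyze the partial sums of $x_1x_2\ldots$ at the block boundaries, which are the running group sums $h_k = g_1+\cdots+g_k$, and to use that the hypotheses constrain how these boundary sums and the intra-block fluctuations behave; for $S=W$ one extracts a decreasing subsequence from the interleaved boundary data using that both sub-concatenations decrease, while for $S=C^\omega\setminus W$ one must rule out any decreasing subsequence, using the per-block condition $x_n^\omega\notin W$ to bound fluctuations inside each block and the absence of decreasing subsequences in the two sub-concatenations to control the boundaries.

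The easier of the two remaining conditions is~B: given $x^\omega\in S$ and $\alpha\in S$, one shows $x\alpha\in S$. Here the partial sums of $x^\omega$ are $\val(x\text{'s prefixes})$ shifted repeatedly by multiples of $g=\val(x)$; the hypothesis $x^\omega\in S$ pins down whether these translates drift downward (giving a decreasing subsequence) or not, and combining this with the behaviour of $\sval(\alpha)$ after the single left-shift by $g$ gives the claim. Once conditions A, B, and C are all verified, $W$ is fairly mixing and Theorem~\ref{thm:gztheorem} delivers bi-positionality. I would present A as essentially free from the prefix-independence lemma, devote a short argument to B, and reserve the bulk of the proof for the two directions of C, where the interplay between boundary sums and within-block fluctuations is the crux.
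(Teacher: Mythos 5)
Your overall strategy---verify the three fairly-mixing conditions of Definition~\ref{def:fairly_mixing} and invoke Theorem~\ref{thm:gztheorem}---is exactly the paper's, and your treatment of condition~A is correct: left-invariance of the order makes the tail of $\sval(x\alpha)$ an order-isomorphic translate of $\sval(\alpha)$, so $W$ is prefix-independent. (Note that condition~B is then free as well: $\alpha\in S$ and prefix-independence of $S$ already give $x\alpha\in S$, so the analysis of ``drift'' you sketch there is unnecessary.) The problem is condition~C, which is where the entire difficulty of the theorem lives and which you explicitly defer. What you offer there is a plan, not an argument, and as stated the plan is unlikely to close: the phrases ``bound fluctuations inside each block'' and ``control the boundaries'' presuppose some way of measuring intra-block partial sums against boundary sums, but a general totally ordered group need not be abelian or Archimedean, so there is no notion of magnitude and nothing to bound with; the hypothesis $x_n^\omega\notin W$ does not constrain the excursions of the partial sums inside a single copy of $x_n$ in any quantitative sense.

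The missing idea is a reformulation that removes the order from the picture entirely. Set $P=\{y\in C^+\mid \val(y)<0\}$. Bi-invariance gives that the $j$th partial sum lies below the $i$th iff $\val(c_{i+1}\cdots c_j)<0$, hence $\alpha\in W$ iff $\alpha$ factors, after some finite prefix, into infinitely many words of $P$. Moreover $P$ and $C^+\setminus P$ are each closed under concatenation and under cyclic shifts (a cyclic shift changes $\val$ by a conjugation, which preserves sign). Condition~C then becomes a purely combinatorial statement about factorizations, which is the content of Proposition~\ref{prop:technical}: assuming a factorization of $x_1x_2x_3\ldots$ into blocks from $P$, one normalizes the cuts to fall only inside, say, the odd-indexed $x_n$, deletes the even-indexed $x_n$ from each block, and shows by repeated cyclic shifts and concatenations with the words $x_{2k}\in C^+\setminus P$ that each truncated block still lies in $P$---producing a forbidden factorization of $x_1x_3x_5\ldots$ and the desired contradiction. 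This cut-and-splice argument, made possible only by the reformulation of $W$ in terms of $P$, is the real content of the proof and is absent from your proposal.
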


We derive it from the following technical result (which will also be useful in Section \ref{sec:conj}). If $C$ is a non-empty finite set and $W\subseteq C^\omega$, define $\per(W) = \{x\in C^+\mid x^\omega \in W\}$ to be the set of periods of periodic words from $W$.

\begin{proposition}
\label{prop:technical}
Let $C$ be a non-empty finite set. Consider any set $P\subseteq C^+$ such that both $P$ and $C^+\setminus P$ are closed under concatenations and cyclic shifts. Define a winning condition $W_P\subseteq C^\omega$ as follows:
\[W_P = \{xy_1 y_2 y_3\ldots \mid x\in C^*, y_1, y_2, y_3,\ldots\in P\}.\]
Then $W_P$ is a prefix-independent fairly mixing winning condition with $P = \per(W_P)$.
\end{proposition}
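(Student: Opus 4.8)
The plan is to verify, in turn, prefix-independence, the identity $P=\per(W_P)$, and then the three clauses A), B), C) of Definition~\ref{def:fairly_mixing}; throughout write $Q=C^+\setminus P$, and recall that by hypothesis both $P$ and $Q$ are closed under concatenation and cyclic shifts. Prefix-independence is direct: adding a prefix to $xy_1y_2\ldots\in W_P$ only enlarges the finite part $x$, and to remove a prefix from a word of the form $zy_1y_2y_3\ldots$ (with $z\in C^*$, $y_i\in P$) one absorbs the stripped letters into $z$, or, if the prefix is longer than $z$, one first merges enough blocks $y_1\ldots y_k$ to cover it and keeps the leftover as the new finite part. Once prefix-independence is known, clauses A) and B) follow immediately: A) is just the two one-sided implications $x\alpha\notin W_P\Rightarrow\alpha\notin W_P$ and $x\beta\in W_P\Rightarrow\beta\in W_P$, and B) is the implication $\alpha\in S\Rightarrow x\alpha\in S$, both valid for $S$ and its complement since the complement of a prefix-independent condition is prefix-independent. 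Neither A) nor B) uses the extra hypotheses on $x$.

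For $P=\per(W_P)$, the inclusion $P\subseteq\per(W_P)$ is immediate, as $x\in P$ gives $x^\omega=x\cdot x\cdot x\cdots\in W_P$. For the reverse inclusion it is convenient to first record the reformulation that $\alpha\in W_P$ \emph{if and only if} there is an increasing sequence of positions $0\le q_0<q_1<q_2<\cdots$ with $\alpha[q_i,q_j)\in P$ for all $i<j$; the forward direction takes the cut points of a $P$-factorization and uses closure of $P$ under concatenation, and the backward direction reads off a factorization from the consecutive gaps. Now suppose $x^\omega\in W_P$ with $|x|=n$, and pick such positions $q_0<q_1<\cdots$. By pigeonhole two of them satisfy $q_i\equiv q_j\pmod n$ with $i<j$, so the factor $x^\omega[q_i,q_j)\in P$ has length a multiple of $n$ and therefore equals a power of some cyclic shift $x'$ of $x$. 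If $x\in Q$ then $x'\in Q$ (cyclic-shift closure) and that power lies in $Q$ (concatenation closure), contradicting its membership in $P$; hence $x\in P$.

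It remains to prove clause C). When $S=W_P$ the hypothesis $\forall n\ x_n^\omega\in W_P$ gives, via $P=\per(W_P)$, that each $x_n\in P$, whence $x_1x_2x_3\ldots$ is already a concatenation of $P$-blocks and lies in $W_P$; the two interleaved hypotheses are not even needed. The substance of the whole proposition is the case $S=C^\omega\setminus W_P$, where the hypotheses read: each $x_n\in Q$ (again by $P=\per(W_P)$), and both $\gamma:=x_1x_3x_5\ldots$ and $\delta:=x_2x_4x_6\ldots$ lie outside $W_P$; we must show $x_1x_2x_3\ldots\notin W_P$. The engine of this step is the following deletion lemma, whose proof is where cyclic-shift closure is indispensable: \emph{if $w\in P$ and $w$ is written as $u_0v_1u_1\cdots v_ku_k$ with each $v_l\in Q$ and $u_0\cdots u_k\in C^+$, then $u_0u_1\cdots u_k\in P$.} It suffices to treat one deletion $w=uvr\in P$, $v\in Q$, $ur\in C^+$: the cyclic shift $vru$ again lies in $P$, but were $ur\in Q$ then its cyclic shift $ru\in Q$ and hence $vru=v\cdot ru\in Q$, a contradiction; so $ur\in P$, and one iterates.

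Granting the lemma, suppose for contradiction $x_1x_2x_3\ldots\in W_P$ and take positions $q_0<q_1<\cdots$ as in the reformulation. Each $q_i$ falls inside some block $x_{k}$; by pigeonhole infinitely many fall inside odd-indexed blocks or infinitely many inside even-indexed ones, say the former (the other case is symmetric, using $\delta$). Passing to this infinite subfamily and projecting each position to the corresponding position in $\gamma=x_1x_3x_5\ldots$, one checks that the $\gamma$-interval between two projected positions is exactly the $\alpha$-interval between the originals with all the intervening even blocks deleted; since each even block lies in $Q$, the deletion lemma shows every such $\gamma$-interval is in $P$. Thus the projected positions witness $\gamma\in W_P$ by the reformulation, contradicting $\gamma\notin W_P$. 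The main obstacle is precisely the deletion lemma together with the bookkeeping of the projection: one must see that cyclic-shift closure is exactly the ingredient allowing one to excise the interleaved $Q$-blocks from a $P$-word without leaving $P$, and that the odd/even hypotheses are what forbid these excised blocks from conspiring to push $\alpha$ into $W_P$.
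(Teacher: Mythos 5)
Your proof is correct and follows essentially the same route as the paper: pigeonhole modulo $|z|$ for $\per(W_P)\subseteq P$, reduction to clause C) via prefix-independence, a parity pigeonhole on the cut positions, and excision of the $Q$-blocks from each $P$-factor using cyclic-shift and concatenation closure. Your ``deletion lemma'' is exactly the contrapositive of the paper's chain of implications (the paper inserts the $x_{2k}, x_{2k+2},\ldots$ one at a time into $y_i'\notin P$ to conclude $y_i\notin P$), so the two write-ups differ only in packaging.
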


Let us start with a derivation of Theorem \ref{thm:pos}.
\begin{proof}[Proof of Theorem \ref{thm:pos} (modulo Proposition \ref{prop:technical})]
Assume that $W\subseteq C^\omega$ is an energy condition over a totally ordered group $(G, +, \le)$, defined by a valuation of colors $\val\colon C \to G$. Set $P = \{y\in C^+ \mid \val(y) < 0\}$. We claim that $W = W_P$. Indeed, $W$ consists of all $\alpha = c_1 c_2 c_3\ldots\in C^\omega$ such that 
\[\sval(\alpha) = (\val(c_1), \val(c_1 c_2), \val(c_1 c_2 c_3), \ldots)\]
has an infinite decreasing subsequence. Consider any $i < j$. Observe that the $j$th element of $\sval(\alpha)$ is smaller than the $i$th element of $\sval(\alpha)$ if and only if 
\[-\val(c_1\ldots c_i) + \val(c_1 \ldots c_j)  = \val(c_{i + 1}\ldots c_j) < 0.\]
In other words, $\sval(\alpha)$ has an infinite decreasing subsequence if and only if $\alpha = c_1 c_2 c_3\ldots$ can be represented, except for some finite prefix, as a as a sequence of words with negative valuations. This means that $W = W_P$.

We now show that both $P$ and $C^+\setminus P$ are closed under concatenations and cyclic shifts. By Proposition \ref{prop:technical}, this would imply that $W = W_P$ is fairly mixing. In turn, by Theorem \ref{thm:gztheorem}, this implies that $W$ is bi-positional.

Consider any two words $x, y\in C^+$. Obviously:
\begin{align*}
    \val(x) < 0, \val(y) < 0 &\implies \val(xy) = \val(x) + \val(y) < 0,\\
    \val(x) \ge 0, \val(y) \ge 0 &\implies \val(xy) = \val(x) + \val(y) \ge 0.
\end{align*}
This demonstrates that both $P$ and $C^+\setminus P$ are closed under concatenations. Now, we claim that $\val(c_1 c_2\ldots c_n) < 0 \iff \val(c_2\ldots c_n c_1) < 0$ for any word $c_1c_2\ldots c_n\in C^+$ (this implies that both $P$ and $C^+\setminus P$ are closed under cyclic shifts). Indeed,
\begin{align*}
     &\val(c_1) + \val(c_2) +  \ldots + \val(c_n) < 0 \\
    &\iff -\val(c_1) + (\val(c_1) + \val(c_2) +  \ldots + \val(c_n)) + \val(c_1) < -\val(c_1) + 0 + \val(c_1)\\
    &\iff \val(c_2) + \ldots + \val(c_n) + \val(c_1) < 0.
\end{align*}
\end{proof}

\begin{proof}[Proof of Proposition \ref{prop:technical}]
Prefix-independence of $W_P$ is immediate. We now show that $P = \per(W_P)$. We have $z^\omega\in W_P$ for any $z\in P$ by definition. Hence, $P\subseteq \per(W_P)$. Now, take any $z\in \per(W_P)$. We show that $z\in P$. By definition of $\per(W_P)$, we have $z^\omega = xy_1 y_2 y_3\ldots$ for some $x\in C^*$ and $y_1,y_2,y_3\ldots \in P$. There exist $i < j$ such that $|xy_1\ldots y_i|$ and $|xy_1\ldots y_j|$ are equal modulo $|z|$. This means that $y_{j+1} \ldots y_j$ must be a multiple of some cyclic shift of $z$. We have that $y_{j+1} \ldots y_j\in P$ because $P$ is closed under concatenations. This means that this cyclic shift of $z$ also belongs to $P$. Indeed, otherwise we could write  $y_{j+1} \ldots y_j$ as a multiple of some word from $C^+\setminus P$, and this is impossible because $C^+\setminus P$ is closed under concatenations. Since $P$ is closed under cyclic shifts, we obtain $z\in P$.

Finally, we show that $W_P$ is fairly mixing. Since $W_P$ is prefix-independent, we should care only about the third item of Definition \ref{def:fairly_mixing}. That is, we only have to show the following two claims:  

\begin{align}
\label{eq:simple}
&\bigg[( x_1 x_3 x_5\ldots\in W_P) \land (x_2 x_4 x_6\ldots\in W_P) \land(\forall n\ge1\,\, x_n^\omega\in W_P) \bigg] \implies x_1 x_2 x_3\ldots\in W_P,\\
\label{eq:hard}
&\bigg[( x_1 x_3 x_5\ldots\in \overline{W_P}) \land (x_2 x_4 x_6\ldots\in \overline{W_P}) \land(\forall n\ge 1\,\, x_n^\omega\in \overline{W_P}) \bigg] \implies x_1 x_2 x_3\ldots\in \overline{W_P},
\end{align}
for every infinite sequence of words $x_1, x_2, x_3,\ldots \in C^+$. Here, for brevity, by $\overline{W_P}$ we denote $C^\omega\setminus W_P$.

We first show \eqref{eq:simple}. If $x_n^\omega\in W_P$ for every $n$, then $x_n\in \per(W_P) = P$ for every $n$, and hence $x_1x_2x_3\ldots\in W_P$ by definition.

A proof of \eqref{eq:hard} is more elaborate. Assume for contradiction that $x_1 x_2 x_3\ldots \in W_P$. Then we can write $x_1 x_2 x_3\ldots = x y_1 y_2 y_3\ldots $ for some $x\in C^*$ and $y_1, y_2, y_3,\ldots\in P$. One can represent the equality as a sequence  of ``cuts'' inside $x_1 x_2 x_3\ldots$, as on the following picture:

\begin{center}
\begin{tikzpicture}
\draw[draw=black] (0,0) rectangle ++(2,1);

\draw[draw=black] (2,0) rectangle ++(4,1);

\draw[draw=black] (6,0) rectangle ++(3,1);

\draw[draw=black] (9,0) rectangle ++(1,1);

\draw[draw=black] (10,0) rectangle ++(3,1);

\draw[draw=red] (1,0) -- (1,1);

\draw[draw=red] (11.5,0) -- (11.5,1);

\draw [decorate,decoration={brace,amplitude=10pt},xshift=0pt,yshift=0pt]
(0,1) -- (2,1) node [black,midway,yshift=0.6cm] {$x_1$};

\draw [decorate,decoration={brace,amplitude=10pt},xshift=0pt,yshift=0pt]
(2,1) -- (6,1) node [black,midway,yshift=0.6cm] {$x_2$};

\draw [decorate,decoration={brace,amplitude=10pt},xshift=0pt,yshift=0pt]
(6,1) -- (9,1) node [black,midway,yshift=0.6cm] {$x_3$};

\draw [decorate,decoration={brace,amplitude=10pt},xshift=0pt,yshift=0pt]
(9,1) -- (10,1) node [black,midway,yshift=0.6cm] {$x_4$};

\draw [decorate,decoration={brace,amplitude=10pt},xshift=0pt,yshift=0pt]
(10,1) -- (13,1) node [black,midway,yshift=0.6cm] {$x_5$};

	\draw [-Latex, color=red](1,-1) -- (1,0);

	\draw [-Latex, color=red](11.5,-1) -- (11.5,0);

\draw [decorate,decoration={brace,amplitude=10pt, mirror},xshift=0pt,yshift=0pt]
(0,0) -- (1,0) node [black,midway,yshift=-0.6cm] {$x$};

\draw [decorate,decoration={brace,amplitude=10pt, mirror},xshift=0pt,yshift=0pt]
(1,0) -- (11.5,0) node [black,midway,yshift=-0.6cm] {$y_1$};

\node[draw=none] (a) at (1.5, 0.5) {$a$};

\node[draw=none] (b) at (10.75, 0.5) {$b$};

\node[draw=none] (a) at (1, -1.2) {\small \textcolor{red}{first cut}};
\node[draw=none] (a) at (11.5, -1.2) {\small \textcolor{red}{second cut}};

\node[draw=none] (a) at (13.7, 0.5) {\huge $\ldots$};
\end{tikzpicture}
\end{center}

Either there are infinitely many cuts inside $x_n$ with odd indices, or there are infinitely many cuts inside $x_n$ with even indices. Without loss of generality, we may assume that we only have cuts inside $x_n$ with odd indices, and at most one for each $n$. Indeed, if necessary, we can join several successive $y_i$'s into one word (this is legal because $P$ is closed under concatenations).

We can now write each $y_i$ as $y_i = ax_{2k} x_{2k + 1}\ldots x_{2m}b$ for some $a,b\in C^*$ and $1\le k\le m$. Now, let $y_i^\prime = a x_{2k + 1} x_{2k + 3} \ldots x_{2m - 1} b$ be a word which can be obtained from $y_i$ by removing $x_n$ with even indices. Additionally, we let $x^\prime\in C^*$ be a word 
which can be obtained from $x$ in the same way.
 Since each $x_n$ with an even index lies entirely in some $y_i$ or in $x$, we have that $x_1x_3x_5\ldots = x^\prime y_1^\prime y_2^\prime y_3^\prime\ldots$, as the following picture illustrates:
\begin{center}
\begin{tikzpicture}
\draw[draw=black] (0,0) rectangle ++(2,1);

\draw[draw=black] (2,0) rectangle ++(3,1);

\draw[draw=black] (5,0) rectangle ++(3,1);

\draw[draw=red] (1,0) -- (1,1);

\draw[draw=red] (6.5,0) -- (6.5,1);

\draw [decorate,decoration={brace,amplitude=10pt},xshift=0pt,yshift=0pt]
(0,1) -- (2,1) node [black,midway,yshift=0.6cm] {$x_1$};

\draw [decorate,decoration={brace,amplitude=10pt},xshift=0pt,yshift=0pt]
(2,1) -- (5,1) node [black,midway,yshift=0.6cm] {$x_3$};

\draw [decorate,decoration={brace,amplitude=10pt},xshift=0pt,yshift=0pt]
(5,1) -- (8,1) node [black,midway,yshift=0.6cm] {$x_5$};

\draw [decorate,decoration={brace,amplitude=10pt, mirror},xshift=0pt,yshift=0pt]
(0,0) -- (1,0) node [black,midway,yshift=-0.6cm] {$x^\prime$};

\draw [decorate,decoration={brace,amplitude=10pt, mirror},xshift=0pt,yshift=0pt]
(1,0) -- (6.5,0) node [black,midway,yshift=-0.6cm] {$y_1^\prime$};

	\draw [-Latex, color=red](1,-1) -- (1,0);

	\draw [-Latex, color=red](6.5,-1) -- (6.5,0);

\node[draw=none] (a) at (1.5, 0.5) {$a$};

\node[draw=none] (b) at (5.75, 0.5) {$b$};

\node[draw=none] (a) at (1, -1.2) {\small \textcolor{red}{first cut}};
\node[draw=none] (a) at (6.5, -1.2) {\small \textcolor{red}{second cut}};

\node[draw=none] (a) at (8.7, 0.5) {\huge $\ldots$};
\end{tikzpicture}
\end{center}
We will show that $y_i^\prime\in P$ for every $P$. This would contradict a fact that $x_1 x_3 x_5\ldots \in \overline{W_P}$.

First, observe that $x_n\notin P$ for every $n$. Indeed, we are given that $x_n^\omega\in \overline{W_P}$ for every $n$. Hence, $x_n \notin \per(W_P) = P$, as required.

Assume for contradiction that $y_i^\prime = ax_{2k + 1} x_{2k + 3}\ldots x_{2m - 1}b\notin P$.  Using a fact that $C^+\setminus P$ is closed under concatenations and cyclic shifts, we obtain:

\begin{align*}
y_i^\prime = &ax_{2k + 1} x_{2k + 3}\ldots x_{2m - 1} b\notin  P \\
\implies &x_{2k + 1} x_{2k + 3}\ldots x_{2m - 1} ba\notin P \\
\implies &x_{2k}x_{2k + 1} x_{2k + 3}\ldots x_{2m - 1} ba\notin  P && \mbox{because } x_{2k}\notin  P\\
\implies & x_{2k + 3}\ldots x_{2m - 1} ba x_{2k} x_{2k + 1}\notin  P \\
\ \implies &x_{2k + 2}x_{2k + 3}\ldots x_{2m - 1} ba x_{2k} x_{2k + 1}\notin  P && \mbox{because } x_{2k + 2}\notin  P\\
&\vdots\\
\implies &x_{2m} ba x_{2k} x_{2k + 1}\ldots x_{2m - 1} \notin P && \mbox{because } x_{2m}\notin  P\\
\implies &y_i = a x_{2k} x_{2k + 1}\ldots x_{2m} b\notin P,
\end{align*}
contradiction.
\end{proof}

\section{Refuting Conjecture \ref{conj}}
\label{sec:refute}

Consider the free group $F_{\{a, b\}}$ with 2 generators $a, b$. As was proved by Shimbireva~\cite{shimbireva1947theory}, see also~\cite[Page 18]{deroin2014groups}, free groups can be totally ordered. We take an arbitrary total ordering $\le$ of $F_{\{a, b\}}$. We also consider its inverse $\le^{-1}$, which is also a total ordering of $F_{\{a, b\}}$. Define a set of colors $C = \{a, a^{-1}, b, b^{-1}, \varepsilon\}$. Here $a^{-1}, b^{-1}$ are inverses of $a, b$ in $F_{\{a, b\}}$, and $\varepsilon$ is the identity element of $F_{\{a,b\}}$.

Let $W_1\subseteq C^\omega$ be an energy condition over $(F_{\{a, b\}}, \le)$, defined by a (suggestive) valuation of colors which interprets elements of $C$ as corresponding elements of $F_{\{a,b\}}$. Similarly, we let $W_2\subseteq C^\omega$ be an energy condition over $(F_{\{a, b\}}, \le^{-1})$, defined by the same valuation. The only difference between $W_1$ and $W_2$ is that they are defined w.r.t.~different total orderings of $F_{\{a, b\}}$ (one ordering is the inverse of the other one).

We show that the union $W_1\cup W_2$ is not half-positional. It consists of all $\alpha \in C^\omega$ such that $\sval(\alpha)$  contains either an infinite decreasing subsequence w.r.t.~$\le$ or an infinite decreasing subsequence w.r.t.~$\le^{-1}$. In other words, it consists of  all $\alpha \in C^\omega$ such that $\sval(\alpha)$  contains either an infinite decreasing subsequence or an infinite \emph{increasing} subsequence w.r.t.~$\le$.

We show that $W_1\cup W_2$ is not half-positional in the following arena.

\begin{center}
\begin{tikzpicture}
\node[draw,  regular polygon, regular polygon sides=4, minimum size=1cm] (1) {};

\node[draw,  circle, right=2cm of 1,minimum size=1cm] (2) {};

\node[draw,  circle, left=2cm of 1, minimum size=1cm] (3) {};

\draw[thick,->] (1) -- (2) node[midway, above] {$\varepsilon$};

\draw[thick,->] (1) -- (3) node[midway, above] {$\varepsilon$};

\path[->]
 (2) edge [thick, in=60,out=120,out distance=1cm,in distance=1cm] node[midway, above] {$b$} (1);

\path[->]
 (2) edge [thick, in=-60,out=-120,out distance=1cm,in distance=1cm] node[midway, below] {$b^{-1}$} (1);

\path[->]
 (3) edge [thick, in=120,out=60,out distance=1cm,in distance=1cm] node[midway, above] {$a$} (1);

\path[->]
 (3) edge [thick, in=-120,out=-60,out distance=1cm,in distance=1cm] node[midway, below] {$a^{-1}$} (1);
\end{tikzpicture}
\end{center}

Here, Alice controls the square and Bob controls the two circles.
Assume that the game starts in the square. We show that Alice has a winning strategy w.r.t.~$W_1\cup W_2$, but not a positional one.

Alice has two positional strategies in this arena: always go to the left and always go to the right. Consider, for example, the first one. Bob has the following counter-strategy which wins against it: alternate the $a$-edge with the $a^{-1}$-edge. We get the following sequence of colors in the play of these two strategies:
\[\varepsilon a \varepsilon a^{-1} \varepsilon a \varepsilon a^{-1}\ldots\]
This sequence does not belong to $W_1\cup W_2$ because
\[\sval(\varepsilon a \varepsilon a^{-1} \varepsilon a \varepsilon a^{-1}\ldots) = \varepsilon, a, a, \varepsilon, \varepsilon, a, a, \varepsilon,\ldots \]
There are only two distinct elements of $F_{\{a,b\}}$ occurring in $\sval(\varepsilon a \varepsilon a^{-1} \varepsilon a \varepsilon a^{-1}\ldots)$. Hence, it neither has an infinite decreasing subsequence nor an infinite increasing subsequence. By the same argument, the second positional strategy of Alice (always go to the right) is not winning w.r.t.~$W_1\cup W_2$ either.

On the other hand, Alice has the following winning strategy: alternate the edge to the left circle with the edge to the right circle. Consider any play with this strategy. Its sequence of colors looks as follows:
\[\varepsilon a^{\pm 1} \varepsilon b^{\pm1} \varepsilon a^{\pm 1} \varepsilon b^{\pm1}\ldots\]
We show that this sequence belongs to $W_1\cup W_2$.
A restriction of $\sval(\varepsilon a^{\pm 1} \varepsilon b^{\pm1} \varepsilon a^{\pm 1} \varepsilon b^{\pm1})$ to elements with even indices looks like this:
\begin{equation}
\label{eq:sec}
a^{\pm 1}, \,\, a^{\pm 1}b^{\pm 1}, \,\,  a^{\pm 1}b^{\pm 1} a^{\pm 1}, \,\, a^{\pm 1}b^{\pm 1} a^{\pm 1}b^{\pm 1} \ldots
\end{equation}

All elements of \eqref{eq:sec} are distinct. Hence, by the Infinite Ramsey Theorem, it either has an infinite decreasing subsequence or an infinite increasing subsequence w.r.t.~$\le$. Indeed, consider an infinite complete graph over $\{1, 2, 3, \ldots\}$, whose edges are colored into green and red as follows. Pick any $i, j\in\{1, 2, 3,\ldots\}$, $i < j$. If the $i$th element of \eqref{eq:sec} is bigger than the $j$th element of \eqref{eq:sec}, then color the edge between $i$ and $j$ into green. Otherwise, color this edge into red (in this case,  the $i$th element of \eqref{eq:sec} is smaller than the $j$th element of \eqref{eq:sec}). Our graph has an infinite induced subgraph in which all edges are of the same color. If they are all green (resp., red), then this subgraph defines an infinite decreasing (resp., increasing) subsequence of \eqref{eq:sec}.

\bigskip

\textbf{Additional remarks.} Energy conditions over free groups are interesting because they are \emph{non-permuting} (if there is more than one generator). A prefix-independent winning condition is permuting if it is closed under permuting periods of periodic sequences. All previously known prefix-independent bi-positional winning condition were permuting. This is because they can be seen as energy conditions over abelian groups (on periodic sequences). In a talk of Colcombet and Niwi\'{n}ski~\cite{slides} it was asked whether there exists a non-permuting bi-positional prefix-independent winning condition. The answer is ``yes''. For example, take $W_1$ as above in this section. Without loss of generality, we may assume that $aba^{-1} b^{-1}$ is negative w.r.t.~$\le$ (otherwise we can consider its inverse). Then $(aba^{-1}b^{-1})^\omega\in W_1$, but $(aa^{-1} b b^{-1})^\omega\notin W_1$.

Additionally, the winning condition $W_1\cup W_2$ is interesting because it refutes  1-to-2-player lifting for half-positionality. Namely, it is easy to see that $W_1\cup W_2$ is positional for Alice in all arenas, where there are no nodes of Bob. This is because she can win in such arenas if and only if there is a reachable non-zero simple cycle. But as we have shown, $W_1\cup W_2$ is not positional for Alice in the presence of Bob. Previously, there were examples that refute 1-to-2-player lifting for half-positionality in stochastic games~\cite{gimbert2014two}.
\section{Discussing Conjecture \ref{our_conj}}
\label{sec:conj}

First, it is useful to understand how prefix-independent bi-positional winning condition are arranged on periodic sequences.
 Luckily, Proposition \ref{prop:technical} gives an answer.

\begin{proposition}
\label{prop:periods}
 Let $C$ be a finite non-empty set. Then for any $P\subseteq C^+$ the following two conditions are equivalent:
\begin{enumerate}[label=\Alph*)]
\item $P = \per(W)$ for some prefix-independent bi-positional winning condition $W\subseteq C^\omega$;
\item $P$ and $C^+\setminus P$ are closed under concatenations and cyclic shifts;
\end{enumerate}

\end{proposition}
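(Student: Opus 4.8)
The plan is to prove Proposition \ref{prop:periods} by establishing the two implications separately, with the bulk of the work residing in the direction $A) \implies B)$, since $B) \implies A)$ follows almost immediately from Proposition \ref{prop:technical}.

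\textbf{The direction $B) \implies A)$.} Suppose $P$ and $C^+\setminus P$ are both closed under concatenations and cyclic shifts. Then Proposition \ref{prop:technical} directly produces the winning condition $W_P$, which it asserts is prefix-independent, fairly mixing, and satisfies $P = \per(W_P)$. By Theorem \ref{thm:gztheorem}, being fairly mixing makes $W_P$ bi-positional. Hence $W_P$ is a prefix-independent bi-positional winning condition with $\per(W_P) = P$, which is exactly what $A)$ asserts. So this implication is a one-line invocation of the machinery already built.

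\textbf{The direction $A) \implies B)$.} Here I assume $P = \per(W)$ for some prefix-independent bi-positional $W$, and I must show that $P$ and its complement are each closed under concatenations and cyclic shifts. Closure under cyclic shifts should be the easier half: if $x = c_1\ldots c_n \in C^+$, then $(c_1\ldots c_n)^\omega$ and $(c_2\ldots c_n c_1)^\omega$ differ only by deleting the finite prefix $c_1$, so by prefix-independence of $W$ they lie in $W$ together; thus $x \in P \iff$ its cyclic shift $\in P$, giving closure under cyclic shifts for both $P$ and $C^+\setminus P$ simultaneously. The real content is closure under concatenation. The natural strategy is to exploit bi-positionality through a concrete small arena. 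Given $x, y \in C^+$, I would build a one-node arena (a single node with two self-loops, one spelling out $x$ and one spelling out $y$, realized via a short chain of auxiliary nodes belonging to a single player) in which that player can realize precisely the periodic words whose periods are built by freely interleaving copies of $x$ and $y$. Positionality then forces the player to commit to looping on $x$ alone or on $y$ alone, yielding $x^\omega$ or $y^\omega$; comparing the value of the positional play against the value of the mixed play $(xy)^\omega$ should force the conclusion that if $x^\omega, y^\omega \in W$ then $(xy)^\omega \in W$ (for the closure of $P$), and dually if $x^\omega, y^\omega \notin W$ then $(xy)^\omega \notin W$ (for the closure of $C^+\setminus P$). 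Since $W$ is bi-positional, the same arena handles both players, so both closure statements emerge from one construction.

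The main obstacle I anticipate is getting the arena gadget exactly right so that half-positionality of $W$ (for the appropriate player) rules out the mixed loop whenever the pure loops agree in membership. Concretely, the subtlety is that $(xy)^\omega$, $(yx)^\omega$, and all finite interleavings share the same tail behavior, and I must ensure the arena lets the relevant player choose among $x^\omega$, $y^\omega$, and $(xy)^\omega$ while keeping the other player powerless, so that a single positional choice is genuinely forced to produce one of $x^\omega$ or $y^\omega$. An alternative route that sidesteps delicate arena engineering is to observe that items B) and C) of the fairly-mixing conditions (Definition \ref{def:fairly_mixing}), applied to the \emph{given} $W$ rather than to $W_P$, already encode exactly the concatenation-closure I need: item B) with $S = W$ and $\alpha = y^\omega$ gives $x^\omega \in W \land y^\omega \in W \implies x y^\omega \in W$, and combining such instances with prefix-independence recovers $(xy)^\omega \in W$; the symmetric instance with $S = C^\omega \setminus W$ handles $C^+\setminus P$. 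However, this shortcut is only available if a bi-positional $W$ is known to satisfy the fairly-mixing axioms, whereas fairly mixing is stated in the excerpt only as a \emph{sufficient} condition; so the safe and self-contained plan is the arena construction above, and I would expect the careful verification that the forced positional play is incompatible with $(xy)^\omega \in W$ to be the crux of the argument.
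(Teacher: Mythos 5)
Your proposal matches the paper's proof: the direction B)\,$\Rightarrow$\,A) is the same one-line invocation of Proposition \ref{prop:technical} and Theorem \ref{thm:gztheorem}, closure under cyclic shifts is derived from prefix-independence exactly as in the paper, and closure under concatenation is obtained from the very arena the paper uses (one choice node lying on an $x$-cycle and a $y$-cycle, all nodes given to the player trying to avoid the relevant set, so that positional strategies yield only $x^\omega$ or $y^\omega$ while alternation yields $(xy)^\omega$). The gadget you worry about is unproblematic and is precisely the paper's construction, so the approach is essentially identical.
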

\begin{proof}
The fact that the second item implies the first item follows from Proposition \ref{prop:technical}. Indeed, if $P$ and $C^+\setminus P$  are closed under concatenations and cyclic shifts, then $P = \per(W_P)$ for a prefix-independent fairly mixing winning condition $W_P$, which is bi-positional by Theorem \ref{thm:gztheorem}. We now show that the first item implies the second item. The fact that $P$ and $C^+\setminus P$ are closed under cyclic shifts is a consequence of the prefix-independence of $W$:
\begin{align*}
    c_1 c_2 \ldots c_n \in P &\iff (c_1 c_2 \ldots c_n)^\omega \in W \iff c_n (c_1 c_2 \ldots c_n)^\omega = (c_n c_1\ldots c_{n - 1})^\omega \in W\\
    &\iff c_n c_1\ldots c_{n - 1}\in P.
\end{align*}
We now show that $P$ is closed under concatenations (there is a similar argument for $C^+\setminus P$). Take any $x,y\in P$. Consider the following arena.

\begin{center}
\begin{tikzpicture}

\node[draw,  circle, right=2cm of 1,minimum size=0.5cm] (A) {};

\path[->] (A) edge [anchor=center,loop below, looseness=15, out=60, in=120] node {$x$} (A);

\path[->] (A) edge [anchor=center,loop below, looseness=15, out=-60, in=-120] node {$y$} (A);
\end{tikzpicture}
\end{center}

It has a central circle node which lies on two simple cycles, one of which is colored by $x$ and the other one by $y$. All nodes are controlled by Bob. Since, $x, y\in P$, we have that $x^\omega,y^\omega\in W$. Hence, Bob does not have a positional winning strategy w.r.t.~$W$ from the central circle. Since $W$ is bi-positional, Bob has no winning strategy from the central circle w.r.t.~$W$. Now, assume that Bob alternates the $x$-cycle with the $y$-cycle. He obtains $(xy)^\omega$ as a sequence of colors. Since this strategy is not winning, we have $xy\in P$.
\end{proof}

In turn, periods of periodic sequences of ETOG conditions are arranged as follows.

\begin{proposition}
Let $C$ be a non-empty finite set and $W\subseteq C^\omega$ be an energy condition over a totally ordered group $(G, +, \le)$, defined by a valuation of colors $\val\colon C\to G$. Then $\per(W) = \{x\in C^+\mid \val(x) < 0\}$.
\end{proposition}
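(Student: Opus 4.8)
The plan is to observe that this proposition is essentially a corollary of the machinery already built for Theorem~\ref{thm:pos}. In the proof of that theorem we set $P = \{x \in C^+ \mid \val(x) < 0\}$, verified that the energy condition $W$ defined by $\val$ equals $W_P$, and checked that both $P$ and $C^+ \setminus P$ are closed under concatenations and cyclic shifts. Proposition~\ref{prop:technical} then yields $P = \per(W_P)$. Chaining these together gives $\per(W) = \per(W_P) = P = \{x \in C^+ \mid \val(x) < 0\}$, which is exactly the claim. So the first thing I would do is simply invoke these two facts and conclude in a single line.

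For readers who prefer a self-contained argument, I would also give a direct verification of the equivalence $x^\omega \in W \iff \val(x) < 0$. Write $g = \val(x)$ and $k = |x|$. For the backward implication, if $g < 0$ then the partial sums in $\sval(x^\omega)$ taken at the positions $k, 2k, 3k, \ldots$ are $\val(x), \val(x^2), \val(x^3), \ldots$, that is $g, g+g, g+g+g, \ldots$; the two-sided compatibility of $\le$ gives $(n+1)g = ng + g < ng + 0 = ng$ whenever $g < 0$, so this is an infinite strictly decreasing subsequence and hence $x^\omega \in W$.

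For the forward implication I would prove the contrapositive: if $g \ge 0$ then $\sval(x^\omega)$ has no infinite decreasing subsequence. Here I use that $\val$ is a homomorphism, so every partial sum of $x^\omega$ has the form $\val(x^m) + \val(p) = mg + r_j$, where $p$ ranges over the finitely many proper prefixes of $x$ and $r_0, \ldots, r_{k-1}$ are their valuations. If $g = 0$, only the finitely many values $r_0, \ldots, r_{k-1}$ occur, so no infinite strictly decreasing subsequence can exist. If $g > 0$, then in any infinite subsequence some single prefix offset $r_j$ recurs infinitely often by pigeonhole, and along those terms the value $mg + r_j$ strictly increases with $m$ (again by two-sided compatibility, $m'g > mg$ for $m' > m$), contradicting strict decrease.

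The only delicate point I anticipate is the non-commutativity of $G$: I must be careful that the partial sum after $m$ full periods followed by a prefix $p$ factors as $mg + r_j$ purely because $\val$ is a homomorphism, and that the monotonicity of $m \mapsto mg$ rests on the two-sided order axiom rather than on any abelian structure. Apart from that the argument is routine, and the cleanest presentation remains the one-line reduction to Theorem~\ref{thm:pos} and Proposition~\ref{prop:technical}.
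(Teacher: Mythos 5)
Your one-line reduction is exactly the paper's proof: define $P = \{x\in C^+\mid \val(x) < 0\}$, cite the derivation of Theorem~\ref{thm:pos} for $W = W_P$ and for the closure of $P$ and $C^+\setminus P$ under concatenations and cyclic shifts, and conclude $\per(W) = \per(W_P) = P$ via Proposition~\ref{prop:technical}. Your supplementary direct verification of $x^\omega\in W\iff \val(x)<0$ is also sound (note that ``decreasing'' is strict throughout, which is what your $g=0$ and $g>0$ cases rely on), but it is not needed.
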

\begin{proof}
 Define $P = \{x\in C^+\mid \val(x) < 0\}$. By the argument from the derivation of Theorem \ref{thm:pos}, we have $W = W_P$. Moreover, it was shown there that $P$ and $C^+\setminus P$ are closed under concatenations and cyclic shifts. Finally, by Proposition \ref{prop:technical}, we have that $P = \per(W_P) = \per(W)$.
\end{proof}

Thus, Conjecture \ref{our_conj} is equivalent to the following conjecture.
\begin{conjecture}
\label{our_conj2}
Let $C$ be any non-empty finite set. Then for any $P\subseteq C^+$ such that $P$ and $C^+\setminus P$ are closed under concatenations and cyclic shifts there exist a totally ordered group $(G, +, \le)$ and a valuation of colors $\val\colon C\to G$ such that $P = \{x\in C^+\mid \val(x) < 0\}$.
\end{conjecture}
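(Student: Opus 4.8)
The plan is to reduce the statement to a purely group-theoretic question about the free group $F_C$ and then to the classical theory of orderable groups. First I would take the freest possible target: set $G=F_C$ and let $\val\colon C\to F_C$ send each color to the corresponding free generator. Since it suffices to exhibit one pair $(G,\val)$, and since the positive monoid $C^+$ embeds into $F_C$ (distinct non-empty positive words give distinct non-trivial elements), the task becomes: construct a bi-ordering of $F_C$ whose negative cone $N=\{g\in F_C\mid g<0\}$ satisfies $N\cap C^+=P$. The two hypotheses are precisely the shadows of the two defining features of a bi-order: closure of $P$ and of $C^+\setminus P$ under concatenation reflects that a product of two negatives (resp. two non-negatives) keeps its sign, while closure under cyclic shifts reflects that conjugation preserves sign, since $\val(c_2\cdots c_nc_1)=\val(c_1)^{-1}\val(c_1\cdots c_n)\val(c_1)$. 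Thus the hypotheses are necessary; the content of the conjecture is their sufficiency.

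Second, I would phrase the construction through the standard semigroup criterion for bi-orderability. Let $X\subseteq F_C$ be the set of elements we are forced to make negative, namely
\[ X=\{\val(x)\mid x\in P\}\cup\{\val(x)^{-1}\mid x\in C^+\setminus P\}, \]
and let $\langle\langle X\rangle\rangle$ denote the smallest subsemigroup of $F_C$ containing $X$ and closed under conjugation by arbitrary elements. The key step is the lemma that $e\notin\langle\langle X\rangle\rangle$. Granting it, the classical extension theorem for orderable groups (extend $\langle\langle X\rangle\rangle$ by Zorn's lemma to a maximal conjugation-invariant subsemigroup avoiding $e$, and argue that maximality forces $F_C\setminus\{e\}=N\sqcup N^{-1}$) yields a bi-order whose negative cone $N$ contains $\langle\langle X\rangle\rangle$. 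For $x\in P$ we then have $\val(x)\in N$, and for $x\notin P$ we have $\val(x)^{-1}\in N$, i.e.\ $\val(x)\in N^{-1}$; since $N$, $N^{-1}$ and $\{e\}$ are pairwise disjoint and no positive word is trivial, this gives exactly $N\cap C^+=P$, which is what we want.

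The main obstacle is the key lemma $e\notin\langle\langle X\rangle\rangle$. A typical element of $\langle\langle X\rangle\rangle$ is a product of conjugates $u_i\,\val(x_i)^{\pm1}\,u_i^{-1}$ with arbitrary conjugators $u_i\in F_C$, whereas the hypotheses on $P$ only control conjugation by prefixes and suffixes (that is, cyclic shifts), and say nothing about conjugators that introduce inverse letters. So the difficulty is to rule out that some product of such conjugates cancels, in the free group, all the way down to the identity. I would attack this with combinatorial group theory on $F_C$: either via the Magnus embedding of $F_C$ into formal power series in non-commuting variables (equivalently, residual torsion-free nilpotence), building a candidate Mal'cev--Neumann type order layer by layer along the lower central series and checking that the signs dictated by $P$ stay consistent through the filtration; or via a compactness reduction, using that the space of bi-orderings of the countable group $F_C$ is compact and each constraint ``$\val(x)<0\iff x\in P$'' is clopen, thereby reducing the whole statement to finite $P$ and a finite cancellation analysis. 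In either route, reconciling the combinatorial data of $P$ (which lives only on positive words) with arbitrary conjugation in $F_C$ is the crux, and is exactly why the statement is left here only as a conjecture.
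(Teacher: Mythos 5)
This statement is left \emph{open} in the paper --- it is Conjecture \ref{our_conj2}, and the paper offers no proof of it; it only proves (in the final proposition of Section \ref{sec:conj}) that it is \emph{equivalent} to a group-theoretic statement about free groups, Conjecture \ref{our_conj3}. Your proposal does not prove the statement either, and you say so yourself: everything hinges on the lemma that the identity does not lie in the conjugation-invariant subsemigroup $\langle\langle X\rangle\rangle$ of $F_C$ generated by $\val(P)\cup\val(C^+\setminus P)^{-1}$, and you leave that lemma unestablished. What you have actually produced is, in essence, the paper's own reduction: your $\langle\langle X\rangle\rangle$-formulation and the paper's ``invariant sub-semigroup $S$ containing $C^+\setminus P$, disjoint from $P$, with $g\in S$ or $g^{-1}\in S$ for all $g$'' are two phrasings of the same open problem. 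So the correct assessment is: no gap in your translation of the problem, but also no proof --- the crux you identify is exactly the crux the paper identifies, and it remains a conjecture.

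One step of your outline is moreover not as ``classical'' as you suggest. You claim that, granting $e\notin\langle\langle X\rangle\rangle$, Zorn's lemma plus ``maximality forces $F_C\setminus\{e\}=N\sqcup N^{-1}$'' yields a bi-order. For \emph{left}-invariant cones maximality does force totality, but for conjugation-invariant (bi-order) cones it does not in general: a maximal conjugation-invariant subsemigroup avoiding $e$ need not be a total positive cone, and the standard extension criterion (Fuchs/Ohnishi) requires checking, for every finite tuple of nontrivial elements, that some choice of signs can be adjoined without generating $e$. This is presumably why the paper's Conjecture \ref{our_conj3} builds the totality requirement (``for every $g\in F_C$ either $g\in S$ or $g^{-1}\in S$'') into the object to be constructed rather than deriving it from maximality. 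For $F_C$ specifically one can hope to route around this via residual torsion-free nilpotence or the Magnus embedding, as you mention, but then the difficulty simply migrates into making the lower-central-series signs consistent with the combinatorial data of $P$ --- which is again the open content of the conjecture.
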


It might be concerning that $P$ and  $C^+\setminus P$ are interchangeable in Conjecture \ref{our_conj2}, while $\val$ treats them asymmetrically. Namely, we require it to be negative on $P$ and \emph{non-negative} on $C^+\setminus P$. However, $\val$ can always be made strictly positive on $C^+\setminus P$. Namely, instead of $G$, consider the direct product $G\times\mathbb{Z}$ with the lexicographic order, and define a new valuation of colors $\val^\prime\colon C\to G\times\mathbb{Z}$,  $\val^\prime(c) = (\val(c), 1)$.

Finally, we notice that our conjecture can be reduced to a reasoning about free groups.
\begin{definition}
A subset $S$ of a group $G$ is called an \textbf{invariant sub-semigroup} of $G$ if the following two conditions hold:
\begin{enumerate}[label=\Alph*)]
\item $xy\in S$ for all $x, y\in S$ (closure under multiplications);
\item $g x g^{-1} \in S$ for all $g\in G, x\in S$ (closure under conjugations with elements of $G$).
\end{enumerate}
\end{definition}
\begin{conjecture}
\label{our_conj3}
Consider an arbitrary non-empty finite set $C$ and  any $P\subseteq C^+$ such that $P$ and $C^+\setminus P$ are closed under concatenations and cyclic shifts. Then there exists an invariant sub-semigroup $S$
of the free group $F_C$ such that, first, $C^+\setminus P$ is a subset of $S$, second, $P$ is disjoint with $S$, and third, for every $g\in F_C$ either $g\in S$ or $g^{-1}\in S$ (in particular, $S$ must have the neutral element).
\end{conjecture}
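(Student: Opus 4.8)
The plan is to realize $S$ as the non-negative cone of a total bi-invariant preorder on $F_C$ and to build it by a maximality argument. Recall the standard dictionary: a subset $S\subseteq F_C$ that is closed under products, is closed under conjugation, contains the neutral element $e$, and satisfies $F_C=S\cup S^{-1}$ is exactly the non-negative cone of a two-sided translation-invariant total preorder $\preceq$, defined by $g\preceq h\iff hg^{-1}\in S$ (bi-invariance follows from conjugation-closure, totality from $F_C=S\cup S^{-1}$). Under this dictionary, $g\in S\iff e\preceq g$, and the requirement $P\cap S=\emptyset$ together with totality forces $p^{-1}\in S$, i.e. each $p\in P$ is \emph{strictly} negative. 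Thus the three conditions of the conjecture say precisely that we seek a total bi-invariant preorder on $F_C$ making every word in $C^+\setminus P$ non-negative and every word in $P$ strictly negative. Since $F_C$ is bi-orderable, such preorders exist in abundance; the whole point is to produce one compatible with the prescribed combinatorial data $P$.

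First I would set up an extension framework. Call an invariant sub-semigroup $S\subseteq F_C$ (closed under products and conjugation, containing $e$) \emph{admissible} if $C^+\setminus P\subseteq S$ and $S\cap P=\emptyset$. The union of a chain of admissible sub-semigroups is again an invariant sub-semigroup, and it clearly still contains $C^+\setminus P$ and avoids $P$; hence, provided the family of admissible sub-semigroups is non-empty, Zorn's Lemma yields a maximal admissible $S^{*}$. The remaining work then splits into two tasks: (i) non-emptiness of the family, and (ii) that maximality forces totality, namely $g\in S^{*}$ or $g^{-1}\in S^{*}$ for every $g\in F_C$.

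For the totality step (ii) I would run the classical positive-cone extension argument, adapted from forbidden set $\{e\}$ to forbidden set $P$. Suppose $g\notin S^{*}$ and $g^{-1}\notin S^{*}$. Let $S^{*}_g$ and $S^{*}_{g^{-1}}$ be the invariant sub-semigroups generated by $S^{*}\cup\{g\}$ and $S^{*}\cup\{g^{-1}\}$; by maximality each meets $P$, producing words $p,q\in P$ expressed as products of elements of $S^{*}$ and conjugates of $g$, respectively of $g^{-1}$. The heart of the argument is to combine the two expressions, multiplying and conjugating so that a conjugate of $g$ cancels a conjugate of $g^{-1}$, and thereby manufacture an element that lies in the invariant sub-semigroup generated by $S^{*}$ alone (hence in $S^{*}$) yet is forced, via closure of $P$ under concatenation and cyclic shift, to reduce to a genuine $P$-word — contradicting admissibility of $S^{*}$. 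The delicate point here is that the closure hypotheses on $P$ are stated only inside $C^+$, whereas the cancellation takes place in $F_C$; I would therefore need a reduced-word/normal-form analysis guaranteeing that, after the free cancellations, the manufactured element really reduces to a word in $C^+$ (up to cyclic shift) to which the closure of $P$ can legitimately be applied.

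The remaining, and I expect hardest, step is the non-emptiness (i): I would try to show that the invariant sub-semigroup $S_0$ generated by $(C^+\setminus P)\cup P^{-1}\cup\{e\}$ is admissible, i.e. $S_0\cap P=\emptyset$. (Note $(C^+\setminus P)^{-1}$ consists of purely negative words and so cannot meet $P\subseteq C^+$, so the only danger is $P$ itself.) Equivalently, no product of conjugates of words from $C^+\setminus P$ and inverses of words from $P$ can equal a word from $P$ inside $F_C$. This is a genuine consistency statement, and it is exactly where the orderability of the free group must be fed in: I would attempt to certify it through a Magnus-type embedding of $F_C$ into a ring of non-commuting power series, where a canonical bi-order lives, and argue that one can tune the low-degree behaviour of the generators so that $C^+\setminus P$ stays non-negative while $P$ stays negative, possibly after passing to a bi-orderable quotient $F_C/N$ with $N=S\cap S^{-1}$. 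The main obstacle is precisely the rigidity of these power-series (or lower-central-series) orders: they are canonical and need not respect an arbitrary $P$, so reconciling the freedom in the choice of $N$ with that rigidity — proving that \emph{some} admissible sub-semigroup exists at all for every $P$ satisfying the closure hypotheses — is the crux on which the whole argument rests.
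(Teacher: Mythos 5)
You should first note that the statement you are proving is posed in the paper as an \emph{open conjecture}: the paper contains no proof of Conjecture~\ref{our_conj3}, only a proof that it is equivalent to Conjecture~\ref{our_conj2} (and that proof of equivalence uses exactly the dictionary you open with, between invariant sub-semigroups $S$ with $S\cup S^{-1}=F_C$ and total bi-invariant preorders with non-negative cone $S$). So there is no paper argument to match, and your proposal, as you yourself partly acknowledge, is a program rather than a proof: both of its load-bearing steps are genuine gaps, and each is essentially the full difficulty of the conjecture in disguise.

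Concretely: for step (ii), the ``classical positive-cone extension argument'' does not exist in the form you invoke. Even with forbidden set $\{e\}$, the claim that for a maximal $S^{*}$ and $g,g^{-1}\notin S^{*}$ one of the extensions $S^{*}_g$, $S^{*}_{g^{-1}}$ remains admissible is precisely the local criterion for bi-orderability (the Ohnishi/Fuchs-type condition); it fails for general groups, and for free groups it is a theorem proved via the Magnus power-series embedding, not by formally combining the two witnesses $p,q\in P$. A product of elements of $S^{*}$ interleaved with conjugates $h_igh_i^{-1}$ (with varying conjugators $h_i$) and a second such product involving conjugates of $g^{-1}$ admit no generic cancellation scheme producing an element of the sub-semigroup generated by $S^{*}$ alone, and there is no reason the resulting element of $F_C$ cyclically reduces to a word of $C^{+}$ to which the concatenation/cyclic-shift closure of $P$ applies --- your own caveat about $P$ living in $C^{+}$ while cancellation happens in $F_C$ is exactly where this breaks. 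For step (i), the non-emptiness claim $S_0\cap P=\emptyset$ for the invariant sub-semigroup generated by $(C^{+}\setminus P)\cup P^{-1}\cup\{e\}$ is not a technical preliminary but the whole consistency content of the conjecture: the Magnus (lower-central-series) orders are canonical and there is no known way to ``tune'' them to an arbitrary $P$ satisfying only the combinatorial closure hypotheses; note also that already for $P$ of priority-mean-payoff type one needs non-abelian value groups, so no low-degree truncation argument can work uniformly. In short, the Zorn framing is sound bookkeeping (chains of admissible sub-semigroups do have admissible unions), but it merely repackages the open problem into two sub-statements, each of which is as hard as --- and jointly equivalent to --- Conjecture~\ref{our_conj3} itself; had either step been completed, the paper's conjecture would be a theorem.
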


\begin{proposition}
Conjecture \ref{our_conj2} is equivalent to Conjecture \ref{our_conj3}.
\end{proposition}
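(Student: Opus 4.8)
The plan is to exploit the standard dictionary between total bi-orderings of a group and their positive cones, together with the universal property of the free group. Recall that in a totally ordered group $(G,+,\le)$ the non-negative cone $G_{\ge 0}=\{g\in G\mid g\ge 0\}$ is an invariant sub-semigroup: it is closed under $+$ because $a,b\ge0$ gives $a+b\ge b\ge0$, and closed under conjugation because $s\ge0$ gives $g+s-g\ge g-g=0$ by bi-invariance. Moreover $G_{\ge0}\cup G_{\ge0}^{-1}=G$ and $G_{\ge0}\cap G_{\ge0}^{-1}=\{0\}$; conversely, any invariant sub-semigroup $S$ with $S\cup S^{-1}=G$ and $S\cap S^{-1}=\{0\}$ defines a total bi-order through $a\le b\iff -a+b\in S$ (reflexivity, antisymmetry, transitivity and totality are routine, left-invariance is immediate, and right-invariance is exactly conjugation-invariance of $S$). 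The second ingredient is that a valuation $\val\colon C\to G$ into a group extends uniquely to a group homomorphism $\phi\colon F_C\to G$ which agrees with $\val$ on the image of $C^+$ under the injective inclusion $C^+\hookrightarrow F_C$. I will prove the two implications separately.

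For Conjecture~\ref{our_conj2} $\Rightarrow$ Conjecture~\ref{our_conj3}, I take the group $(G,+,\le)$ and valuation $\val$ supplied by Conjecture~\ref{our_conj2}, extend $\val$ to $\phi\colon F_C\to G$, and pull back the non-negative cone, setting $S=\phi^{-1}(G_{\ge0})=\{h\in F_C\mid\phi(h)\ge0\}$. Then $S$ is an invariant sub-semigroup since $G_{\ge0}$ is one and $\phi$ is a homomorphism. The inclusion $C^+\setminus P\subseteq S$ and the disjointness $P\cap S=\emptyset$ are immediate from $P=\{x\in C^+\mid\val(x)<0\}$, because for $x\in C^+$ with image $w$ we have $w\in S\iff\phi(w)=\val(x)\ge0\iff x\notin P$. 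The third condition follows from totality: $\phi(g)\ge0$ yields $g\in S$, while $\phi(g)\le0$ yields $\phi(g^{-1})=-\phi(g)\ge0$, i.e. $g^{-1}\in S$; in particular $0\in S$.

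For the converse, where the real work lies, I take the invariant sub-semigroup $S\subseteq F_C$ from Conjecture~\ref{our_conj3}. The obstacle is that Conjecture~\ref{our_conj3} does not require $S\cap S^{-1}=\{0\}$, so $S$ is the positive cone of a total \emph{preorder} rather than of an order; I resolve this by passing to a quotient. Set $N=S\cap S^{-1}$. I will check that $N$ is a normal subgroup of $F_C$: it is a subgroup because $g,h\in N$ gives $gh\in S$ and $(gh)^{-1}=h^{-1}g^{-1}\in S$, and it is normal because conjugation-invariance of $S$ forces $gNg^{-1}\subseteq N$. Let $\pi\colon F_C\to G:=F_C/N$ be the quotient map and $\bar S=\pi(S)$. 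Since $N\subseteq S$ and $S$ is multiplicatively closed, $S$ is a union of $N$-cosets, so $\pi^{-1}(\bar S)=S$; together with $S\cup S^{-1}=F_C$ this gives $\bar S\cup\bar S^{-1}=G$ and $\bar S\cap\bar S^{-1}=\{\bar 0\}$. Hence $\bar S$ is a genuine positive cone and defines a total bi-order $\le$ on $G$ as above.

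Finally I define the valuation as the composite $\val\colon C\hookrightarrow F_C\xrightarrow{\pi}G$, so that for $x\in C^+$ with image $w\in F_C$ one has $\val(x)=\pi(w)=\bar w$, and verify $P=\{x\in C^+\mid\val(x)<0\}$. If $x\in C^+\setminus P$ then $w\in S$, so $\bar w\in\bar S$, i.e. $\bar w\ge\bar 0$ and $\val(x)\not<0$. If $x\in P$ then $w\notin S$, hence $w\notin N$ and $\bar w\ne\bar0$; the third condition gives $w^{-1}\in S$, i.e. $\bar w\le\bar0$, so $\bar w<\bar0$ and $\val(x)<0$. This completes both implications. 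The one genuinely delicate point throughout is the quotient step: checking that $N$ is normal and that $\bar S$ descends to a strict positive cone is precisely what converts the preorder hidden in Conjecture~\ref{our_conj3} into the honest total order demanded by Conjecture~\ref{our_conj2}.
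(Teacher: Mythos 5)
Your proof is correct and takes essentially the same route as the paper's: the forward direction pulls back the non-negative cone along the homomorphic extension of $\val$ to $F_C$, and the converse quotients $F_C$ by $N=S\cap S^{-1}$, which is precisely the paper's congruence $f\sim g\iff fg^{-1},gf^{-1}\in S$, and then orders the quotient by the image of $S$. The only difference is presentational (normal subgroup and positive-cone dictionary versus congruence and explicitly defined relation $\preceq$).
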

\begin{proof}

Consider an arbitrary non-empty finite set $C$.
It is sufficient to show that for any $P\subseteq C^+$ the following two conditions are equivalent:
\begin{enumerate}[label=\Alph*)]
\item  there exist a totally ordered group $(G, +, \le)$ and a valuation of colors $\val\colon C\to G$ such that $P = \{x\in C^+\mid \val(x) < 0\}$.

\item there exists an invariant sub-semigroup $S$
of the free group $F_C$ such that, first, $C^+\setminus P$ is a subset of $S$, second, $P$ is disjoint with $S$, and third, for every $g\in F_C$ either $g\in S$ or $g^{-1}\in S$.
\end{enumerate}
We first establish $\mbox{A)}\implies\mbox{B)}$. Extend $\val$ to a homomorphism from $F_C$ to $G$ by setting $\val(c^{-1}) = -\val(c)$ for $c\in C$.
Set $S = \{g\in F_C\mid \val(g) \ge 0\}$. It is easy to check that all conditions on $S$ are satisfied.

Now we establish $\mbox{B)}\implies\mbox{A)}$. Let $S$ be as in B). Consider a binary relation $\sim$ on $F_C$, defined by $f\sim g\iff fg^{-1}, gf^{-1}\in S$ for $f, g\in F_C$. A fact that $S$ is an invariant sub-semigroup with the neutral element implies that $\sim$ is a congruence on the group $F_C$. Let $G = F_C/\sim$ be the corresponding quotient group. Now, consider a binary relation $\preceq$ on $F_C$, defined by $f\preceq g\iff gf^{-1}\in S$ for $f, g\in F_C$ (observe that $f\sim g \iff f\preceq g, g\preceq f$). It is easy to see that $\preceq$ is correctly defined over $F_C/\sim$, whose elements are equivalence classes of $\sim$. More formally, it holds that if $a\sim b, x\sim y$, then $a\preceq x \iff b\preceq y$ (it can again be derived from the fact that $S$ is an invariant sub-semigroup). It is also routine to check that $\preceq$ defines a total ordering on $G$. We need a condition that either $g\in S$ or $g^{-1}\in S$ for every $g\in F_C$ only to show the totality of our order. Namely, to show that there are no $f, g\in F_C$ with $f\not\preceq g$ and $g\not\preceq f$, we notice that otherwise neither $g f^{-1}$ nor $f g^{-1} = (g f^{-1})^{-1}$ are in $S$. Observe that the equivalence class of $g\in F_C$ w.r.t.~$\sim$ is non-negative in $(G, \preceq)$ if and only if $g\in S$. Now, recall that $C^+\setminus P$ is a subset of $S$ and $P$ is disjoint with $P$.
Hence, if we consider a valuation of colors $\val\colon C\to G$, which maps $c\in C$ to its equivalence class w.r.t.~$\sim$, then $P$ would be the set of words from $C^+$ whose valuation is negative w.r.t.~$\preceq$.
\end{proof}

\end{document}